\newcommand{\bn}{\mathbb{N}}
\newcommand{\bool}{\mathbb{B}}
\newcommand{\option}[1]{\text{option}(#1)}
\newcommand{\seq}[1]{\langle{#1}\rangle}
\newcommand{\caseof}[5]{\mathsf{case} ~ #1 ~\mathsf{of} ~ \left(#2.#3 \| #4.#5\right)}
\newcommand{\shift}[2]{\mathcal{S}#1.#2}
\newcommand{\reset}[1]{\# #1}
\newcommand{\inl}[1]{\iota_1#1}
\newcommand{\inr}[1]{\iota_2#1}
\newcommand{\reczero}{\text{R}}
\newcommand{\ifelse}[3]{\mathsf{if} ~ #1 ~ \mathsf{then} ~ #2 ~ \mathsf{else} ~ #3}
\DeclareMathOperator{\len}{len}
\newcommand{\boole}{{\mathsf{bool}}}
\newcommand{\destas}[4]{\mathsf{dest} ~ #1 ~\mathsf{as} ~ (#2.#3) ~\mathsf{in}~ #4}
\newcommand{\destasNewLine}[4]{\mathsf{dest} ~ #1 ~\\~~~~\mathsf{as} ~ (#2.#3) ~\mathsf{in}~ #4}
\newcommand{\fst}[1]{\pi_1#1}
\newcommand{\snd}[1]{\pi_2#1}
\theoremstyle{definition}
\newtheorem{axiom}{Axiom}
\newcommand{\dnsv}{DNS$^{\text{V}}$}
\newcommand{\dnsvs}{DNS$^{\text{V}}_S$}
\newcommand{\AC}{AC$^{0,0}$}
\newcommand{\ACnatbool}{AC!$^{0,\bool}$}
\newcommand{\axc}[1]{\AxiomC{#1}}
\newcommand{\uic}[2]{\RightLabel{\small{#2}}\UnaryInfC{#1}}
\newcommand{\bic}[2]{\RightLabel{\small{#2}}\BinaryInfC{#1}}
\newcommand{\tic}[2]{\RightLabel{\small{#2}}\TrinaryInfC{#1}}
\newcommand{\mqcplusS}[1]{\mathrm{MQC}_+(#1)}
\newcommand{\haomega}{HA$^\omega$}
\newcommand{\haomegaplusS}[1]{\mathrm{HA}^\omega_+(#1)}
\DeclareMathOperator{\hd}{head}
\DeclareMathOperator{\tail}{tail}
\newcommand{\OIC}{OI-$\bool$}
\newcommand{\OIX}{OI-$X$}
\title{A Direct Version of Veldman's Proof of Open Induction on Cantor Space via Delimited Control Operators\footnote{D. Ilik's work is covered by a Kurt G\"odel Research Prize Fellowship 2011.}\footnote{K. Nakata acknowledges the ERDF funded EXCS project, the Estonian Ministry of Education and Research research theme no. 0140007s12, and the Estonian Science Foundation grant no. 9398.}}
\titlerunning{Open Induction via Delimited Control Operators}
\author[1]{Danko Ilik}
\author[2]{Keiko Nakata}
\affil[1]{Research Center for Computer Science and Information Technologies\\
  Macedonian Academy of Sciences and Arts\\
  Skopje, Macedonia\\
  \texttt{danko.ilik@gmail.com}}
\affil[2]{Institute of Cybernetics\\
  Tallinn University of Technology\\
  Tallinn, Estonia\\
  \texttt{keiko@cs.ioc.ee}}
\authorrunning{D. Ilik and K. Nakata} 
\subjclass{F.4.1 Mathematical Logic, F.3.3 Studies of Program Constructs}
\keywords{Open Induction, Axiom of Choice, Double Negation Shift, Markov's Principle, delimited control operators}
\begin{document}

\maketitle

\begin{abstract}
First, we reconstruct Wim Veldman's result that Open Induction on Cantor space can be derived from Double-negation Shift and Markov's Principle. In doing this, we notice that one has to use a countable choice axiom in the proof and that 
Markov's Principle is replaceable by slightly strengthening the Double-negation Shift schema. We show that this strengthened version of Double-negation Shift can nonetheless be derived in a constructive intermediate logic based on delimited control operators, extended with axioms for higher-type Heyting Arithmetic. We formalize the argument and thus obtain a proof term that directly derives Open Induction on Cantor space by the shift and reset delimited control operators of Danvy and Filinski.
\end{abstract}

\section{Introduction}

Let $X$ be a set with an equality relation $=_X$ and a binary relation $<_X$. 
We denote by $X^\omega$ and $X^*$ the set of
infinite sequences, or \emph{streams}, over $X$
and the set of finite sequences over $X$, respectively. 
Let elements of $X^\omega$ be denoted by Greek letters $\alpha, \beta, \gamma$, let natural numbers be denoted by $n, k, l, m$, and let $\overline\alpha n$ denote the finite sequence $\langle\alpha(0),\alpha(1), \ldots, \alpha(n-1)\rangle$, i.e., the initial segment of length $n$ of the sequence $\alpha$.

The lexicographic extension $<_{X^\omega}$ of $<_X$ is a binary relation on streams, defined by
\[
\alpha<_{X^\omega}\beta \text{ iff } \exists n(\overline\alpha n =_{X^*} \overline\beta n 
\wedge \alpha(n) <_X \beta(n)),
\]
where $=_{X^*}$ denotes the equality relation induced from $=_X$ by element-wise
comparison, i.e., $p =_{X^*} q$ iff $p$ and $q$ are of the same length and
element-wise equal with respect to $=_X$.

A non-empty subset $U$ of $X^\omega$ is called \emph{open} if there is
an enumeration $\pi : \bn\to X^*$ which can approximate $U$, in the
sense that membership in $U$ can be defined\footnote{For simplicity,
  we exclude the possibility of $U=\emptyset$, so that we may take
  \emph{total} enumerations $\pi$, rather than partial enumerations,
  sending $\bn$ to $\option{X^*}$.} by
\[
\alpha\in U \text{ iff } \exists n\exists k(\overline\alpha n =_{X^*} \pi(k)).
\]

The \emph{Principle of Open Induction on $X^\omega$} (equipped with $<_X$ and $=_X$) is the following statement, for $U$ open: 
\[\tag{\OIX}
  \forall\alpha\left(\forall\beta<_{X^\omega}\alpha\left(\beta\in U\right)\to\alpha\in U\right) \to \forall\alpha(\alpha\in U).
\]

One immediately sees that \OIX\ has the form of a well-founded induction principle. However, one should note that, even for the simple choice of $X=\{0,1\}$ equipped with the usual decidable order and equality relation, an open set $U$ is generally uncountable, and the lexicographic ordering $<_{X^\omega}$ is not well-founded!

The utility of this principle has been recognized by Raoult \cite{Raoult1988} who gave, using \OIX, a new version of Nash-Williams' proof of Kruskal's theorem that does not explicitly use the Axiom of Dependent Choice\footnote{Raoult proves
\OIX\ using Zorn's Lemma.}. 

\OIX\ was introduced in the context of Constructive Mathematics by Coquand \cite{Coquand1992}. He proved \OIX\ by relativized Bar Induction, and also first considered separately the version for $X^\omega$ being the Cantor space \cite{Coquand1997}.
 
Berger \cite{Berger2004} showed that \OIX\ in higher-type Arithmetic, where $X$ can be any type $\rho$, is classically equivalent to the Axiom of Dependent Choice (DC) for the type $\rho$. He also gave
a modified realizability interpretation of \OIX\ by a schema of Open Recursion, and showed that, unlike DC, \OIX\ is closed under double-negation- and A-translation -- this means that there is a simple way
to extract open-recursive programs from classical proofs
of $\Pi^0_2$-statements that use DC or \OIX.

In the context of Constructive Reverse Mathematics, in a series of lectures \cite{Veldman2010}, Veldman showed that Open Induction for Cantor space is equivalent to Double-negation Shift,
\begin{equation}\tag{DNS}\label{equation:dns} 
\forall n \neg\neg A(n) \to \neg\neg\forall n A(n) \quad\text{(for any formula $A(n)$)},
\end{equation}
in presence of Markov's Principle,
\begin{equation}\tag{MP}\label{equation:mp} \neg\neg\exists n A_0(n) \to \exists n A_0(n) \quad\text{(for a decidable $A_0(n)$)}.
\end{equation}

Given that it is possible to obtain proofs for both MP \cite{Herbelin2010} and DNS \cite{Ilik2010} using constructive logical systems based on delimited control operators, it is a natural next step to attempt to provide a direct constructive proof of OI for Cantor space based on delimited control operators. This is what we do in this paper.

The remainder of the paper is organized as follows. In Section~\ref{sec:dns2oi}, we reconstruct in detail Veldman's argument that proves OI on Cantor space from DNS and MP via the principle EnDec. In Section~\ref{sec:shift2endec}, we recall the logical system $\mqcplusS{S}$ from \cite{Ilik2010} that is able to prove a strengthened version DNS$_S$ of DNS using delimited control operators. DNS$_S$ allows us to prove (a minimal logic version of) EnDec without explicitly using MP. In Section~\ref{sec:proofterm}, we give a formalized proof term for OI on Cantor space in a variant of HA$^\omega$ based on the logical system $\mqcplusS{S}$. In the concluding Section~\ref{sec:conclusion}, we explain the current limitation of our approach for extracting proofs from programs and we mention directly related works.

\section{From DNS and MP  to Open Induction for Cantor Space}\label{sec:dns2oi}

We will consider the case $X = \bool$, where $\bool = \{0,1\}$ with
$0 <_{\bool} 1$ and $0~=_{\bool}~0$, 
$1 =_{\bool} 1$, that is, Open Induction \emph{on Cantor space}, \OIC. We will show that \OIC\ is provable from DNS, MP, and \ACnatbool, where 
\[\tag{\ACnatbool}
\forall x^\bn\exists! y^\bool A(x,y) \to \exists f^{\bn\to\bool}\forall x^\bn A(x,f(x))
\]
is a restriction of the Axiom of Unique Countable Choice (also known as Countable Comprehension). All the arguments of this section take place in plain intuitionistic logic; if a principle that is not intuitionistically derivable is used, that is explicitly noted.

In addition to the already introduced notational conventions, let
$p,q,r,s$ denote finite binary sequences (bit-strings), $\bool^*$, and
let $p*q$ denote the concatenation of $p$ and $q$. 
For a natural number $k$, $\bool^k$ denotes the set of bit-strings
of length $k$. Concrete 
bit-strings are constructed using the notation $\langle\cdot\rangle$,
e.g. $\seq{}$ denotes an empty sequence, $\seq{0}$ the bit-string of
length 1 that contains a 0, $\seq{1,1,1,1}$ the bit-string that
contains four 1's, etc.  Thus $p*\seq{0}$ means that a zero bit is
appended at the end of $p$. The function $\len(p)$ computes the
length of $p$. Analogously to the initial segment function
$\overline\alpha n$ on infinite sequences, we denote by $\overline{p}
n$ the initial segment function on finite sequences, with default
value $\overline{p}n := p$ when $n>\len(p)$. Instead of writing
$<_{\bool^\omega}$ and $=_{\bool^*}$, we simply write $<$ and $=$. We
abbreviate $(S_1 \to S_2) \wedge (S_2 \to S_1)$ to $(S_1
\leftrightarrow S_2)$. We may write $n \not\in A$ to mean $\neg(n \in
A)$.

By a \emph{$\Sigma$-formula}, we mean a formula built only from 
existential quantifiers (over the set $\bn$),
disjunction,
conjunction,
and the equality symbol ``$=$'' for $\bn$.
This definition is equivalent to the
usual definition of $\Sigma^0_1$-formula if the language has
all the primitive recursive symbols, as is the case for the system from Section~\ref{sec:proofterm}.

We say that a set $B\subseteq\bn$ is \emph{enumerable} when 
the membership in $B$ is a $\Sigma$-formula, i.e.,
$n \in B$ is defined as $S(n)$ for a $\Sigma$-formula $S$.
Equivalently\footnote{``Equivalent'' in the system from Section~\ref{sec:proofterm}.}, $B$ is enumerable when $B$ is given by a function 
$f:\bn \to \bn$ such that $n \in B$ is a notation for 
$\exists m(f(m) = n+1)$. 
A set $B\subseteq\bn$ is \emph{decidable} when we have that $\forall n (n \in B \vee n \not\in B)$\footnote{In some literature, our ``decidable'' is called 
``detachable''.}. 

\medskip

Veldman introduced the following principle.
\begin{axiom}[EnDec]\label{axiom:endec} Assume $B\subseteq \bn$ is enumerable. Let, for any decidable $C\subseteq B$, we have that, if $\exists m (m\not\in C)$, then $\exists m (m\not\in C \wedge m\in B)$. Then $\bn\subseteq B$ (and hence $B$ is decidable).
\end{axiom}
Note that EnDec holds classically, since classically any $B$ is decidable, so we may set $C:=B$ to obtain $\bn\subseteq B$. Our interest in EnDec here is because it is a stepping stone to proving \OIC.

\begin{theorem}\label{thm:endec2oi}
Assuming \ACnatbool, EnDec implies Open Induction on Cantor space.
\end{theorem}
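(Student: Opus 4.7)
The plan is to associate to the open set $U$ an enumerable subset $B \subseteq \bn$ of codes of finite bit-strings in such a way that $\bn \subseteq B$ implies $\forall\alpha(\alpha\in U)$, and then to apply EnDec after verifying its premise by means of OI's induction step.

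First, I would fix an open $U$ with enumeration $\pi: \bn \to \bool^*$ and take $B := \{p \in \bool^* : \exists k (\pi(k) \sqsubseteq p)\}$, the prefix-upward closure of the image of $\pi$. Viewing $\bool^*$ as $\bn$ through a standard coding, this $B$ is manifestly enumerable, and it is upward closed under the prefix order. Moreover $\bn \subseteq B$ yields in particular $\seq{} \in B$, hence $\pi(k) = \seq{}$ for some $k$; since $\seq{}$ is a prefix of every $\alpha$, the OI conclusion $\forall\alpha(\alpha\in U)$ follows at once.

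Next, assuming the hypothesis of OI, I would verify the premise of EnDec for this $B$. Given a decidable $C \subseteq B$ with $\exists m(m \notin C)$, the task is to exhibit $m \in B \setminus C$. I would apply \ACnatbool\ to the decidability of $C$ to obtain a characteristic function $\chi_C : \bool^* \to \bool$, and then use $\chi_C$ to define $\alpha \in \bool^\omega$ by primitive recursion via a ``leftmost'' rule: set $\alpha(n) := 0$ if $\overline{\alpha}n * \seq{0} \notin C$, and $\alpha(n) := 1$ otherwise. The crucial property is that every $\beta <_{\bool^\omega} \alpha$ lies in $U$: at the first index $k$ where $\beta$ and $\alpha$ differ, $\beta(k)=0$ and $\alpha(k)=1$, so by the recursion clause $\overline{\alpha}k*\seq{0}\in C \subseteq B$, and since $B$ is upward closed and this sequence is a prefix of $\beta$, we get $\beta \in U$. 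Applying the OI hypothesis to $\alpha$ now gives $\alpha \in U$, so some initial segment $\overline{\alpha}n$ lies in $B$; from this I want to extract the desired $m \in B \setminus C$ and then feed EnDec to conclude $\bn \subseteq B$ and hence OI.

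The step I expect to be the main obstacle is this last extraction of $m \in B \setminus C$ from $\alpha \in U$. The greedy recursion does not automatically force the first $B$-prefix of $\alpha$ to escape $C$: at an index where both $\overline{\alpha}n*\seq{0}$ and $\overline{\alpha}n*\seq{1}$ happen to lie in $C$, the construction still outputs $\alpha(n)=1$ and the resulting $\overline{\alpha}(n+1)$ may itself sit in $C$. Overcoming this will probably require a preliminary case split on whether $\seq{} \in B$ (the trivial subcase, in which $B = \bn$ and any $m \notin C$ witnesses the conclusion), followed by an induction showing that as long as $\overline{\alpha}n$ has stayed outside $C$ so far, either $\overline{\alpha}(n+1)$ remains outside $C$ or some earlier prefix of $\alpha$ is already in $B \setminus C$. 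The bookkeeping for this case analysis, together with the essential use of \ACnatbool\ to internalise the decidability predicate as a function usable inside the primitive recursion defining $\alpha$, form the technical crux of the argument.
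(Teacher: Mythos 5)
Your overall architecture---reduce \OIC\ to a statement about an enumerable set of finite strings, verify the EnDec premise by building a greedy sequence from the characteristic function supplied by \ACnatbool, and use progressiveness to force that sequence into $U$---is the same as the paper's, and you correctly located the hard point. But your choice of $B$ is wrong, and the obstacle you flagged at the end is not bookkeeping: it is unresolvable for your $B$. You set $B:=\{p:\exists k\,(\pi(k)\sqsubseteq p)\}$, so that $\seq{}\in B$ forces $\seq{}$ itself to occur in the enumeration $\pi$. The conclusion of EnDec, $\bn\subseteq B$, is therefore false for a typical progressive open set: take $\pi$ enumerating just $\seq{0}$ and $\seq{1}$; then $U$ is all of Cantor space and is progressive, yet $\seq{}\notin B$. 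Consequently the premise of EnDec must also fail for your $B$ (take $C:=B$, which is decidable in this example), and it fails exactly in the case you identified: when both $\overline{\alpha}n*\seq{0}$ and $\overline{\alpha}n*\seq{1}$ lie in $C\subseteq B$, nothing lets you conclude $\overline{\alpha}n\in B$, because a string can fail to extend any $\pi(k)$ even though both of its one-bit extensions do.

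The missing idea is to build the compactness of Cantor space into the definition of $B$. The paper takes $p\in B$ iff $p$ is \emph{uniformly barred} by $\pi$, i.e.\ $\exists k\,\forall q\in\bool^k\,\exists l,m\,(\overline{p*q}\,l=\pi(m))$. This is still a $\Sigma$-formula (the universal quantifier is bounded), $\seq{}\in B$ still yields $\forall\alpha(\alpha\in U)$, and your progressiveness argument survives, since $\overline{\beta}(n+1)\in B$ still implies that $\beta$ is covered at some finite depth. Crucially, this $B$ satisfies the two-child closure property: if $p*\seq{0}$ and $p*\seq{1}$ are uniformly barred at depths $k_0$ and $k_1$, then $p$ is uniformly barred at depth $\min(k_0,k_1)+1$. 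That is precisely what closes the bad case of your backward induction, by exhibiting $\overline{\alpha}n$ itself as the desired element of $B\setminus C$. With this change of $B$ (and the preliminary split on $\seq{}\in C$ you already anticipated), your argument becomes the paper's proof.
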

\begin{proof}
Let $A$ be a non-empty open subset of Cantor space\footnote{The progressiveness on Cantor space in fact ensures that $A$ is non-empty.} i.e., there exists $\pi : \bn\to \bool^*$ such that ``$\alpha\in A$'' is a notation for $\exists l,m(\overline\alpha l = \pi(m))$.
Let also $A$ be \emph{progressive}, that is,
\[
\forall \alpha (\forall\beta<\alpha(\beta\in A)\to \alpha\in A).
\]
We want to show that $\forall\alpha(\alpha\in A)$. Define $B\subseteq \bool^*$
as 
\[
p\in B \text{ iff } \exists k \forall q\in\bool^k\exists l,m (\overline{p*q}\, l = \pi(m))
\]
such that $p$ is in $B$ if $p$ is ``uniformly barred'' by $\pi$. That
is, $p\in B$ if there exists $k$ such that any extension of $p$ by a
finite bit-string of length $k$ is covered by $\pi(m)$ for some
$m$\footnote{A bit-string $p$ is \emph{covered} by $q$ if, as a
  bit-string, $q$ is a prefix of $p$, or the open set given by $p$ is
  covered by the open set given by $q$.}.

It suffices to show $\seq{}\in B$ for the empty bit-string $\seq{}$,
since we then know that $\pi$ covers the entire Cantor space. 
We show that $B$ is actually equal to $\bool^*$, using EnDec. Notice that $\bool^*$ is bijective to $\bn$ by primitive recursive functions and $B$ is enumerable\footnote{$B$ is enumerable because it is defined by a $\Sigma$-formula: the bounded universal quantifier ``$\forall q\in\bool^k$'' does not pose a problem, since it could be interpreted as a bounded minimization operator, for example like in \S 3.5 of \cite{Kohlenbach}.},
hence we may transport EnDec from $\bn$ to $\bool^*$.
It is left to show that, for any decidable subset $C\subseteq B$, if $\exists q(q\not\in C)$, then $\exists r(r\not\in C \wedge r\in B)$.

Suppose that such $C$ and $q$ are given. If $\seq{} \in C \subseteq B$, 
then we have that $q \in B$. So we are done. We assume $\seq{} \not\in C$.
Since $C$ is decidable, we can construct
$\alpha$, using \ACnatbool, such that
\[
\alpha(n) := \left\{ 
\begin{array}{ll}
  0 & \text{, if } \overline\alpha n * \seq{0} \not\in C\\
  1 & \text{, if } \overline\alpha n * \seq{0} \in C \text{ and } \overline\alpha n * \seq{1} \not\in C\\
  0 & \text{, if } \overline\alpha n * \seq{0} \in C \text{ and } \overline\alpha n * \seq{1} \in C
\end{array}
\right.
\]
The sequence $\alpha$ tries to stay outside of $C$ for as long as
possible and tries to be minimal. It first tries to ``turn left''
(value $0$). If it was not possible, i.e., $\overline\alpha n * \seq{0} \in
C$, then it tries to ``turn right'' (value $1$). If neither was possible,
then it defaults to ``turning left''.  One may notice that if $\alpha$ fails to stay
outside of $C$ at $n+1$, i.e., $\overline\alpha n * \seq{0} \in C$ and
$\overline\alpha n * \seq{1} \in C$, then we have 
$\overline{\alpha}n \in B$. This fact, a manifestation of the compactness of Cantor space, will be used later in the proof. 

Now, we can find a prefix of $\alpha$ that is in $B$ but not in $C$,
by following $\alpha$ up to the first point where it enters $B$.  Let
us first prove that $\alpha$ is in $A$, which guarantees that $\alpha$
has a prefix in $B$, hence that $\alpha$ will enter $B$.  We use
progressiveness of $A$. Let $\beta<\alpha$ i.e., $\exists n
(\overline\beta n = \overline\alpha n \wedge \beta(n)=0 \wedge
\alpha(n)=1)$. We have to show $\beta \in A$.  By construction of
$\alpha$, $\alpha(n)=1$ is only possible if $\overline\alpha
n*\seq{0} \in C$ and $\overline\alpha n*\seq{1} \not\in C$.
Noticing that $\overline\beta(n+1) = \overline\beta n*\seq{0} =
\overline\alpha n*\seq{0}$, this yields 
$\overline\beta(n+1)\in C\subseteq B$. We conclude that $\beta\in A$,
which was to be shown.

From $\alpha\in A$, we obtain $l, m$ such that $\overline\alpha l = \pi(m)$.
We finish the proof by proving the following more general statement by induction 
\[
\forall n\le l\left(\overline\alpha(l-n)\not\in C \to \exists l'(\overline\alpha l'\not\in C \wedge \overline\alpha l' \in B)\right).
\]
Indeed, since we have $\seq{} \not\in C$, by instantiating the above statement
with $n := l$, we obtain $p$ such that $p \not\in C$ and $p \in B$.

In the base case, $n=0$, we have that $\overline\alpha l\not\in C$
by the hypothesis and that $\overline\alpha l\in B$ (from $\alpha \in A$); 
so we set $l':=l$.
In the induction case for $n+1$ we consider three possibilities:
  \begin{enumerate}
  \item if $\overline\alpha(l-(n+1))*\seq{0}\not\in C$, then 
$\overline\alpha(l-n) = \overline\alpha(l-(n+1)+1) = 
\overline\alpha(l-(n+1))*\seq{0} \not \in C$ and we close the case by induction hypothesis;
  \item similarly, if $\overline\alpha(l-(n+1))*\seq{0}\in C$ and $\overline\alpha(l-(n+1))*\seq{1}\not\in C$, then $\overline\alpha(l-n) = \overline\alpha(l-(n+1)+1) = \overline\alpha(l-(n+1))*\seq{1} \not\in C$, and we close the case
by induction hypothesis;
  \item if $\overline\alpha(l-(n+1))*\seq{0}\in C$ and $\overline\alpha(l-(n+1))*\seq{1}\in C$, then we get that $\overline\alpha(l-(n+1))\in B$ as we noted earlier. Recalling that we also have $\overline\alpha(l-(n+1))\notin C$ by
hypothesis, we can set $l' := l-(n+1)$. 
  \end{enumerate}
The first two cases could be merged into one, verifying only whether $\overline\alpha(l-(n+1)+1)\not\in C$.
\end{proof}

\begin{remark}\label{remark:ac} 
In the previous proof, we used \ACnatbool\ when constructing the
sequence $\alpha$ by course-of-values recursion using the choice
function extracted from the decidability of $C$. Since the principle
EnDec is classically valid, not using a choice axiom would mean that
one can reduce \OIC\ (and, using Berger's results \cite{Berger2004},
also Dependent Choice for $\bool$) to plain classical logic
without choice\footnote{Classically \ACnatbool\ is equivalent to Dependent 
Choice for $\bool$ (in Berger's formulation), hence that we only use \ACnatbool\ is not a 
concern.}.
\end{remark}

We now consider the principle of Double-negation Shift (DNS), which is independently important because it allows to interpret the double-negation translation of the Axiom of Countable Choice \cite{Spector}. Following Veldman, we find it useful to consider the following variant of DNS.
\begin{axiom}[\dnsv]\label{axiom:dnsv} $\neg\neg \forall n(A(n) \vee \neg A(n))$, for any formula $A(n)$.
\end{axiom}
\begin{remark} The proof of equivalence between DNS and \dnsv\ is analogous to the proof of equivalence between the law of double-negation elimination (DNE) and the law of excluded middle (EM). In minimal logic, which is intuitionistic logic without the rule of $\bot$-elimination (\textit{ex falso quodlibet}), EM is weaker than DNE \cite{AriolaH2003}. We expect a similar result for DNS, i.e., that \dnsv\ is weaker than DNS in minimal logic.
\end{remark}

When quantifier-free formulas and decidable formulas coincide, as in Arithmetic, we may state Markov's Principle using $\Sigma$-formulas.
\begin{axiom}[MP]\label{axiom:mp} For any $\Sigma$-formula $S$, we have that $\neg\neg S \to S$.
\end{axiom}

We can now prove EnDec from \dnsv and MP. 

\begin{theorem}\label{thm:dnsmp2endec} \dnsv  and 
MP  together imply EnDec.
\end{theorem}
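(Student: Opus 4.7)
The plan is to derive $\forall n(n\in B)$ in three moves: (i) use \dnsv\ to place the decidability of $B$ under a double negation; (ii) under the temporary assumption that $B$ is decidable, instantiate the EnDec premise at $C:=B$, where the conclusion $\exists m(m\notin B\wedge m\in B)$ is visibly contradictory and forces $\forall n(n\in B)$; (iii) use MP to cross back from $\neg\neg(n\in B)$ to $n\in B$, exploiting that $B$ being enumerable makes $n\in B$ a $\Sigma$-formula.

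Concretely, I would first apply \dnsv\ to $A(n):=(n\in B)$ to obtain $\neg\neg\forall n(n\in B\vee n\notin B)$. Next, assuming $\forall n(n\in B\vee n\notin B)$, the set $C:=B$ is itself a decidable subset of $B$, so the EnDec hypothesis instantiated there reads $\exists m(m\notin B)\to\exists m(m\notin B\wedge m\in B)$; the right-hand side is absurd, giving $\neg\exists m(m\notin B)$, i.e.\ $\forall m\neg\neg(m\in B)$, and the assumed decidability upgrades this to $\forall m(m\in B)$. Monotonicity of $\neg\neg$ with respect to implication then lifts this step under the outer double negation, yielding $\neg\neg\forall m(m\in B)$. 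Finally, for an arbitrary $n$, $\neg\neg\forall m(m\in B)$ entails $\neg\neg(n\in B)$; since $B$ is enumerable, $n\in B$ is a $\Sigma$-formula, so MP delivers $n\in B$, and universal generalization in $n$ concludes.

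The main thing to get right is the division of labour between the two principles rather than any technical obstacle: \dnsv\ alone would keep the conclusion trapped under $\neg\neg$, while MP alone could not supply the decidability premise needed to pick $C:=B$. MP is applicable only because $B$ is assumed enumerable, and that same assumption is what makes the entire strategy go through; the short contradiction with $C:=B$ is precisely the classical triviality of EnDec, and \dnsv\ together with MP is just enough to recover it intuitionistically.
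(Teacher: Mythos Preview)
Your proof is correct and follows essentially the same strategy as the paper: use \dnsv\ to assume decidability of $B$ under a double negation, set $C:=B$ so that the EnDec premise yields a contradiction, and invoke MP on the $\Sigma$-formula $n\in B$. The only difference is order: the paper fixes $n$ and applies MP first, so that the assumption $\neg(n\in B)$ itself supplies the witness for $\exists m(m\notin B)$, sparing your detour through $\neg\exists m(m\notin B)\to\forall m\,\neg\neg(m\in B)$ and the second appeal to decidability.
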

\begin{proof}
Let the premises of EnDec hold. Given $n\in \bn$, we have to prove
$n\in B$, which is a $\Sigma$-formula. 
We are entitled to apply MP. Now, we have to show that $\neg\neg(n\in B)$.
Suppose $\neg(n\in B)$. Thanks to \dnsv, it suffices to prove $\bot$
assuming moreover that $B$ is decidable, i.e., 
$\forall n (n\in B \vee \neg(n\in B))$.
We use the premise of EnDec by taking
$C := B$ and recalling that we have $\neg(n\in B)$.
This gives us $\exists m (m\in B \wedge \neg(m\in B))$, from which we derive $\bot$.
\end{proof}

\section{A Constructive Logic Proving EnDec}\label{sec:shift2endec}

In this section, we recall the logical system $\mqcplusS{S}$ from \cite{Ilik2010}, 
and show that EnDec is provable in $\mqcplusS{S}$ 
(with a suitably instantiated parameter $S$), 
without an explicit use of MP, thanks to the slightly stronger form of DNS 
that $\mqcplusS{S}$ proves.

$\mqcplusS{S}$ is a pure predicate logic system, 
parameterized over a closed $\Sigma$-formula $S$, 
that, in addition to the usual rules of minimal intuitionistic predicate logic, adds two rules for proving the $\Sigma$-formula $S$
\footnote{In the context of $\mqcplusS{S}$, $\Sigma$-formulas coincide with formulas without $\forall$ and $\to$.}. 
The rule ``reset'',
    \begin{prooftree}
      \axc{$\Gamma\vdash_S S$}
      \uic{$\Gamma\vdash_\diamond S$}{$\#$ (``reset''),}
    \end{prooftree}
sets a marker (under the turnstile) meaning that one wants to prove $S$. Once the marker is set, one can use the ``shift'' rule,
    \begin{prooftree}
      \axc{$\Gamma, A\Rightarrow S\vdash_S S$}
      \uic{$\Gamma\vdash_S A$}{$\mathcal{S}$ (``shift''),}
    \end{prooftree}
to prove by a principle related to double-negation elimination from classical logic. The idea is to internalize in the formal system the fact, known from Friedman-Dragalin's A-translation, that a classical proof of a $\Sigma^0_1$-formula can be translated to an intuitionistic proof of the same formula, showing that classical proofs of such formulas are in fact constructive. The first system built around this internalization idea was Herbelin's \cite{Herbelin2010} with the power to derive Markov's Principle. It satisfies, like $\mqcplusS{S}$, the disjunction and existence properties, characteristic of plain intuitionistic logic.

The names ``shift'' and ``reset'' come from the computational intention behind the normalization of these proof rules, Danvy and Filinski's  delimited control operators \cite{DanvyF1989,DanvyF1990,DanvyF1992}. These operators were developed in the theory of programming languages with the aim of enabling to write continuation-passing style (CPS) programs in so-called \emph{direct style}. Since CPS transformations are known to be one and the same thing as double-negation translations \cite{MurthyThesis}, one can think of shift/reset in Logic as enabling to prove \emph{directly} theorems whose double-negation translation is intuitionistically provable. In order for this facility to remain constructive, we allow its use only for proving $\Sigma$-formulas.

\begin{table}
  \centering
  \begin{tabular}{ m{5cm} m{6cm} }
    \multicolumn{2}{ m{11cm} }{
      \begin{prooftree}
        \axc{$(a:A) \in \Gamma$}
        \uic{$\Gamma\vdash_\diamond a:A$}{\textsc{Ax}}
      \end{prooftree}
    }
    \\
    \begin{prooftree}
      \axc{$\Gamma\vdash_\diamond p:A_1$}
      \axc{$\Gamma\vdash_\diamond q:A_2$}
      \bic{$\Gamma\vdash_\diamond (p,q):A_1\wedge A_2$}{$\wedge_I$}
    \end{prooftree}
    &
    \begin{prooftree}
      \axc{$\Gamma\vdash_\diamond p:A_1\wedge A_2$}
      \uic{$\Gamma\vdash_\diamond \pi_i\, p:A_i$}{$\wedge^i_E$}
    \end{prooftree}
    \\
    \begin{prooftree}
      \axc{$\Gamma\vdash_\diamond p:A_i$}
      \uic{$\Gamma\vdash_\diamond \iota_i\, p:A_1\vee A_2$}{$\vee^i_I$}
    \end{prooftree}
    & \\
    \multicolumn{2}{ m{11cm} }{
      \begin{prooftree}
        \axc{$\Gamma\vdash_\diamond p:A_1\vee A_2$}
        \axc{$\Gamma, a_1:A_1\vdash_\diamond q_1:C$}
        \axc{$\Gamma, a_2:A_2\vdash_\diamond q_2:C$}
        \tic{$\Gamma\vdash_\diamond \caseof{p}{a_1}{q_1}{a_2}{q_2}:C$}{$\vee_E$}
      \end{prooftree}
    }
    \\
    \begin{prooftree}
      \axc{$\Gamma, a:A_1\vdash_\diamond p:A_2$}
      \uic{$\Gamma\vdash_\diamond \lambda a.p:A_1\to A_2$}{$\to_I$}
    \end{prooftree}
    &
    \begin{prooftree}
      \axc{$\Gamma\vdash_\diamond p:A_1\to A_2$}
      \axc{$\Gamma\vdash_\diamond q:A_1$}
      \bic{$\Gamma\vdash_\diamond p\, q:A_2$}{$\to_E$}
    \end{prooftree}
    \\
    \begin{prooftree}
      \axc{$\Gamma\vdash_\diamond p:A(x)$}
      \axc{$x~\text{fresh}$}
      \bic{$\Gamma\vdash_\diamond \tilde\lambda x.p:\forall x  A(x)$}{$\forall_I$}
    \end{prooftree}
    &
    \begin{prooftree}
      \axc{$\Gamma\vdash_\diamond p:\forall x A(x)$}
      \uic{$\Gamma\vdash_\diamond p\, t:A(t)$}{$\forall_E$}
    \end{prooftree}
    \\
    \begin{prooftree}
      \axc{$\Gamma\vdash_\diamond p:A(t)$}
      \uic{$\Gamma\vdash_\diamond (t,p):\exists x.A(x)$}{$\exists_I$}
    \end{prooftree}
    & \\
    \multicolumn{2}{ m{11cm} }{
      \begin{prooftree}
        \axc{$\Gamma\vdash_\diamond p:\exists x.A(x)$}
        \axc{$\Gamma, a:A(x)\vdash_\diamond q:C$}
        \axc{$x~\text{fresh}$}
        \tic{$\Gamma\vdash_\diamond \destas{p}{x}{a}{q}:C$}{$\exists_E$}
      \end{prooftree}
    }
    \\
    \begin{prooftree}
      \axc{$\Gamma\vdash_S p:S$}
      \uic{$\Gamma\vdash_\diamond \reset{p}:S$}{$\#$ (``reset'')}
    \end{prooftree}
    &
    \begin{prooftree}
      \axc{$\Gamma, k:A\to S\vdash_S p:S$}
      \uic{$\Gamma\vdash_S \shift{k}{p}:A$}{$\mathcal{S}$ (``shift'')}
    \end{prooftree}\\
    \multicolumn{2}{ m{11cm} }{
    }\\
    ~ & ~
    \\
  \end{tabular}
  
  \caption[\mqcplus with proof terms]{Natural deduction system for $\mqcplusS{S}$, parameterized over a closed $\Sigma$-formula $S$, with proof terms annotating the rules}
  \label{table:mqcplus}
\end{table}
The natural deduction system for $\mqcplusS{S}$ 
is given in Table~\ref{table:mqcplus} with proof term annotations. The diamond in the subscript of $\vdash$ is a wild-card: $\vdash_\diamond$ denotes either $\vdash$ or $\vdash_S$, where in the latter the subscript $S$ is the same formula as the parameter $S$. We mark $\vdash$ with
the parameter to record that a reset has been set. 
The rules should be read bottom-up, so that the marker is propagated from below to above the line. The usual intuitionistic rules neither ``read'' nor ``write'' this marker, hence $\diamond$ denotes the same below and above the line.
 The reset rule is the one that sets the marker (if it is not already set). 
If the marker has been already set, then the marker is simply kept. 
This kind of use of reset would have no logical purpose, 
but it would affect the course of normalization, hence the computational behavior of 
the proof term. 
The rule shift can only be applied when the marker is set, hence
it is assured that we are ultimately proving the $\Sigma$-formula $S$. 

\medskip

The following theorem shows a utility of proving with shift and reset.
\begin{theorem} Let $S$ be a closed $\Sigma$-formula and $A(x)$ an arbitrary formula. The following version of \dnsv,
\begin{equation}\tag{\dnsvs}
\bigg(\Big(\forall x \big(A(x) \vee \left(A(x) \to S\right)\big)\Big)\to S\bigg)\to S,
\end{equation}
is provable in $\mqcplusS{S}$.
\end{theorem}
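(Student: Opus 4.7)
The plan is to construct an explicit proof term that builds in the classical ``excluded middle'' reasoning via a single use of shift, with the $\Sigma$-formula $S$ playing the role that $\bot$ plays in classical logic. The overall shape of the term I would give is
\[
\lambda h.\, \reset{\,h\,\bigl(\tilde\lambda x.\, \shift{k}{k\,(\iota_2(\lambda a.\, k\,(\iota_1 a)))}\bigr)\,},
\]
which is essentially the CPS-style proof of $((\forall x(A(x)\vee\neg A(x)))\to\bot)\to\bot$ rewritten in direct style by shift and reset.

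First I would apply $\to_I$ to introduce the hypothesis $h:(\forall x(A(x)\vee(A(x)\to S)))\to S$, reducing the goal to $\vdash_\diamond S$. Since $S$ is the designated $\Sigma$-formula of the system, I apply the reset rule to pass to $\vdash_S S$, and then apply $\to_E$ with $h$, leaving the subgoal $\vdash_S \forall x(A(x)\vee(A(x)\to S))$. By $\forall_I$ with fresh $x$, this reduces further to $\vdash_S A(x)\vee(A(x)\to S)$, and at this point the shift rule is applicable: it lets me assume a continuation $k:(A(x)\vee(A(x)\to S))\to S$ and reduces the goal again to $\vdash_S S$.

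The real content of the proof is the final term inhabiting $S$ under $k$, namely $k\,(\iota_2(\lambda a.\, k\,(\iota_1 a)))$. The inner $\lambda a.\, k\,(\iota_1 a)$ has type $A(x)\to S$: whenever someone later hands me a proof $a$ of $A(x)$, I can feed $\iota_1 a$ back into the captured continuation $k$ and obtain $S$. Wrapping this in $\iota_2$ gives a proof of $A(x)\vee(A(x)\to S)$ using the right disjunct, which I then pass to $k$ itself to close the goal $S$. The crucial step—and the only place classical reasoning is hidden—is this \emph{double use} of the same continuation $k$, once under $\iota_1$ inside a $\lambda$ and once under $\iota_2$ on the outside.

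I do not expect any genuine obstacle: the construction is a straightforward direct-style rendering of the standard A-translated proof of weak excluded middle, and each inference step matches precisely one rule of Table~\ref{table:mqcplus}. The only point requiring care is checking the turnstile annotations—that the reset is in scope whenever shift is applied, and that when shift is invoked inside $\tilde\lambda x$, the variable $x$ is fresh for the continuation's codomain $S$ (which holds since $S$ is closed by assumption). Verifying these bookkeeping conditions along the derivation tree completes the proof.
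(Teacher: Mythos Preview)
Your proposal is correct and is exactly the paper's proof: the paper simply exhibits the proof term $\lambda h.\ \reset{h\,(\tilde\lambda x.\ \shift{k}{k(\inr{(\lambda a.\ k(\inl{a}))})})}$, which is identical to yours. Your additional walkthrough of the derivation and the bookkeeping about turnstile annotations and freshness of $x$ is a welcome expansion of what the paper leaves implicit.
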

\begin{proof} Using the proof term $\lambda h. \reset{h \bigg(\tilde\lambda x. \shift{k}{k\Big(\inr{\big(\lambda a. k (\inl{a})\big)}\Big)}\bigg)}$.
\end{proof}
\dnsvs\ is a version of \dnsv, in which $\bot$ is generalized
to a closed $\Sigma$-formula $S$. \dnsvs\ already has some form of MP built in, as can be seen from the proof of Theorem~\ref{thm:dns2endec} below.

We now state a version of EnDec which is suitable for use in minimal logic, 
where $\bot$-elimination is absent. 
\begin{axiom}[A minimal-logic version of Axiom~\ref{axiom:endec}]\label{axiom:endecmin}
Assume that $B\subseteq \bn$ is enumerable and $n\in\bn$. Let, for any $s\in\bn$ and any $C\subseteq B$, such that
\[
\forall x \left(x\in C \vee \left(x\in C \to s\in B\right)\right),
\]
we have that, if \[\exists m (m\in C\to s\in B),\] then \[\exists m ((m\in C\to s\in B) \wedge m\in B).\] Then, $n\in B$.
\end{axiom}

The following result is the minimal-logic analogue of Theorem~\ref{thm:dnsmp2endec}, showing that an instance of Axiom~\ref{axiom:endecmin} is derivable in $\mqcplusS{S}$.
\begin{theorem}\label{thm:dns2endec} 
Assume that $B\subseteq \bn$ is enumerable and $n\in\bn$. 
The instance of Axiom~\ref{axiom:endecmin} with conclusion $n\in B$ is derivable in the system $\mqcplusS{n\in B}$.
\end{theorem}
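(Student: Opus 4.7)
The plan is to mirror the structure of Theorem~\ref{thm:dnsmp2endec}, letting \dnsvs\ with parameter $S := n\in B$ play the combined role of \dnsv\ and MP. Since $B$ is enumerable and $n$ is fixed, $n\in B$ is a closed $\Sigma$-formula, so $\mqcplusS{n\in B}$ is a legitimate instance of the system and \dnsvs\ is available for any $A(x)$.

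First I would instantiate \dnsvs\ with $A(x) := x\in B$, obtaining the formula
\[
\bigg(\Big(\forall x\big(x\in B \vee (x\in B \to n\in B)\big)\Big)\to n\in B\bigg)\to n\in B.
\]
It therefore suffices to prove the antecedent: assume a hypothesis $H : \forall x(x\in B \vee (x\in B \to n\in B))$ and derive $n\in B$. Next, I would apply the hypothesis of Axiom~\ref{axiom:endecmin} with the ``diagonal'' choices $C := B$ (legal since $B \subseteq B$) and $s := n$. The precondition of that hypothesis is exactly $H$, so we obtain
\[
\exists m(m\in B \to n\in B) \to \exists m\big((m\in B \to n\in B) \wedge m\in B\big).
\]
The antecedent on the left is inhabited by the pair $(n, \lambda a.a)$. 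Destructing the resulting witness yields some $m$ together with $f:m\in B\to n\in B$ and $h:m\in B$, whence $f\,h : n\in B$ closes the proof.

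I do not foresee any serious obstacle: the only real ``move'' is to match the parameter $S$ of \dnsvs\ to the target $n\in B$, which turns the double-negation structure into exactly the form demanded by the minimal-logic EnDec. In the classical proof of Theorem~\ref{thm:dnsmp2endec}, the diagonal choice $C := B$ produced a contradiction $\neg(n\in B) \wedge (n\in B)$ that had to be discharged via MP; here the same diagonal choice produces, constructively, a witness of the form $(m\in B \to n\in B) \wedge m\in B$, so the Markov-style step is absorbed into the parameterization over $S$ rather than appearing as a separate axiom. The only care needed is to supply the trivial inhabitant $(n, \lambda a.a)$ of $\exists m(m\in B \to n\in B)$, which is the constructive stand-in for ``$n \in B$ is its own MP-witness''.
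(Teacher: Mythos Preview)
Your proposal is correct and follows essentially the same argument as the paper: instantiate \dnsvs\ with $A(x):=x\in B$ and $S:=n\in B$, apply the premise of Axiom~\ref{axiom:endecmin} with the diagonal choice $C:=B$, $s:=n$, supply the trivial witness $(n,\lambda a.a)$ for $\exists m(m\in B\to n\in B)$, and conclude $n\in B$ from the resulting pair. Your additional commentary about how MP is ``absorbed'' into the parameterization over $S$ is exactly the point the paper makes informally before the theorem.
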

\begin{proof} Let the premises of Axiom~\ref{axiom:endecmin} hold. To show that $n\in B$, which is a $\Sigma$-formula, we use \dnsvs\ for $A(x):=x\in B$ and $S:=n\in B$. Now, given $\forall x (x\in B \vee (x\in B \to n\in B))$, we have to show $n\in B$. We use the premise of Axiom~\ref{axiom:endecmin} for $s:=n$ and $C:=B$, and, using the trivial proof of $\exists m (m\in B \to n\in B)$ for $m:=n$, the premise gives us a proof of $\exists m (m\in B \wedge (m\in B \to n\in B))$, from which we derive $n\in B$.
\end{proof}

\section{A Proof Term for Open Induction}\label{sec:proofterm}

In this section, we give a proof term for OI on Cantor space in the
system $\haomegaplusS{S}$ (by suitably instantiating the parameter $S$), 
which is the system of axioms \haomega\ (from
\S\S 1.6.15 of \cite{TroelstraBook}) and \ACnatbool\ added on top of
the predicate logic $\mqcplusS{S}$ --- 
the need of \ACnatbool\ is justified by 
Remark~\ref{remark:ac}. 
Basic ingredients to construct the
proof term are at hand: Theorem~\ref{thm:endec2oi} and 
Theorem~\ref{thm:dns2endec}. We are to interpret them in 
$\haomegaplusS{S}$ and combine the thus obtained proof terms for 
Theorem~\ref{thm:endec2oi} and Theorem~\ref{thm:dns2endec}.

\subsection{The system $\haomegaplusS{S}$}

Let $S$ be a closed $\Sigma$-formula.
First, we take a multi-sorted version of $\mqcplusS{S}$, that is, given different sorts (denoted by $\sigma,\rho,\tau, \delta$), the language is extended with individual variables (denoted by $x,y,z$) of any sort, and quantifiers for all sorts. We will not annotate quantifiers with their sorts, since those will be clear from the context; we may annotate variables by their sorts when we want to avoid ambiguity.

The sorts are built inductively, according to the following rules: there is a sort named 0; if $\rho$ and $\sigma$ are sorts, then there is a sort named $\rho\to\sigma$. The intended interpretation is that the sort 0 stands for $\bn$, the sort $0\to 0$ stands for functions $\bn\to\bn$, the sort $((0\to 0)\to 0)$ for functionals $(\bn\to\bn)\to\bn$, etc. We will employ the word `type' instead of sort, henceforth, and we abbreviate the type $0\to 0$ by $1$.

Now, we add to the language a binary predicate symbol $=$ for individual terms of type 0, intended to be interpreted as (the decidable) equality on $\bn$. We emphasize that we only have decidable equality. The individual terms will be built from the function symbols $0^0$ (zero), $(\cdot + 1)^1$ (successor), $\Pi^{\rho\to\tau\to \rho}$ and $\Sigma^{(\delta\to \rho\to \tau) \to (\delta\to\rho) \to \delta \to \tau}$ (combinators), and $\reczero^{0\to \rho\to(\rho\to 0\to \rho)\to \rho}$ (recursor of type $\rho$). There is also the function symbol of juxtaposition which is not explicitly denoted: for terms $t^{\sigma\to\tau}$ and $s^\sigma$, $t\, s$ is a term of type $\tau$.

The axioms defining these symbols are (the universal closures of each of):
\begin{align*}
  x = x, & & x = y \to y = x,  & & x = y \to y = z \to x = z, & & x = y \to x+1 = y+1,
\end{align*}
\begin{align*}
  x = y \to t[x/z] = t[y/z] & & \text{where $t[x/z]$ is the simultaneous}\\
                               & & \text{substitution of $x$ for $z$ in $t$}
\end{align*}
\begin{align*}
  t[\Pi x y / u] &= t[x / u]\\
  t[\Sigma x y z / u] &= t[x z (y z) / u]\\
  t[\reczero 0 y z / u] &= t[y / u]\\
  t[\reczero (x+1) y z / u] &= t[z (\reczero x y z) x / u]
\end{align*}
We also add the axiom schema of induction, for arbitrary formula $A(x)$, but only for variables $x$ of type $0$:
\begin{equation}\tag{IA}
  A(0) \to \forall x^0 (A(x) \to A(x+1)) \to \forall x^0 (A(x))
\end{equation}
Since ``$=$'' is the only predicate symbol, all atomic (prime) formulas are of form $t = s$. This allows us to show that $x = y \to A(x)\to A(y)$, by induction on the complexity of formula $A$. 

It is known that using the combinators one may define an individual term for lambda abstraction, denoted $\dot\lambda x. t$, of type $1$, which satisfies the usual $\beta$-reduction axiom, \[(\dot\lambda x^0. s^0) t^0 = s[t/x].\] Using this and the recursor $\reczero$, one can easily define all the usual primitive recursive functions.  Using the thus defined predecessor function, and the induction axiom, one can derive the remaining Peano axioms, $x+1 = y+1 \to x = y$, and $(x+1 = 0) \to 1= 0$, where we took $1= 0$ instead of $\bot$ because we are in minimal logic. In fact, in the presence of arithmetic, one can prove, again by induction, that the rule of $\bot$-elimination (with $\bot$ replaced by $1=0$) is derivable, although we will not need it.

Some notational conventions follow. We shall need to speak of bits, finite sequences of bits (bit-strings), and infinite sequences of bits (bit-streams). Bits and bit-strings can be encoded by natural numbers, but, instead of using the type 0 for terms of that kind, to be more pragmatic, we will write $\boole$ (intended to interpret $\bool$) and $\boole^*$ (intended to interpret $\bool^*$). 
Bitstreams are represented by terms of type $0\to 0$, but we will write $0\to\boole$ instead. 
We will need the operations for concatenation and initial segments of both bit-strings and bit-streams, that we already introduced. In addition, the operator $\hd(p)$ returns the first bit of $p$, while $\tail(p)$ returns the string that follows the first bit of $p$.  
Although $p$ is not a function, we will use the notation $p(n)$ to extract the $(n+1)$-th bit of $p$\footnote{$\hd{p}$ (resp. $p(n)$) returns an arbitrary default value when $p$ is an empty sequence (resp. $\len(p) < n+1$). However, we will use these operations only in a well-defined way.}. We will also use the fact that one can define by primitive recursion a term $\ifelse{\cdots}{\cdots}{\cdots}$ of type $\boole\to \boole\to \boole \to \boole$, such that the following equations hold:
\begin{align*}
  \ifelse{0}{y}{z} &= z\\
  \ifelse{x+1}{y}{z} &= y
\end{align*}
We will also need the usual operation $\min : 0\to 0\to 0$ on numbers. All the mentioned operations can be defined by a restricted amount of primitive recursion at higher types, level 3 of the Grzegorcyk hierarchy would suffice. Hence we could work in a corresponding subsystem of \haomega, like for example G$_3$A$^\omega_i$ from \S 3.5 of \cite{Kohlenbach}.

\medskip

Finally, we shall also need the following choice axiom, a restriction of the usual Axiom of Countable Choice (\AC): 
\begin{equation}\tag{\ACnatbool}
  \forall x^0\exists! y^\boole A(x,y) \to \exists \phi^{0\to\boole}\forall x^0 A(x,\phi\, x)
\end{equation}
Neither \AC\ nor \ACnatbool\ is provable in \haomega. For arithmetical formulas, \AC\ (and hence \ACnatbool) is an admissible rule for \haomega~\cite{Beeson1979}.

\subsection{Proof term for \OIC}

We now formalize the concepts involved in the proof of \OIC.
An open set $A$ in Cantor space 
is given, as a parameter to the logical system, 
by a term $\pi$ of type $0\to \boole^*$, an enumeration of basic opens. 
Each bit-string $\pi(n)$ is a basic open and the union of them makes
$A$. Membership in $A$, $\alpha\in A$, means that $\alpha$ is covered by some basic open from the
enumeration. Formally, we define
\[
\alpha\in A \text{ iff } \exists l^0\exists m^0 (\overline\alpha\, l = \pi(m)),
\]
and we see that membership in $A$ is a closed $\Sigma$-formula. 
(Recall that $\pi$ is a parameter of the logical system.)
The relation $<$ on bit-streams is formalized as
\[
\beta<\alpha \text{ iff } \exists n^0 \left(\overline\beta n =\overline\alpha n \wedge (\beta(n) = 0 \wedge \alpha(n) = 1)\right).
\]

We use an instance of Axiom~\ref{axiom:endecmin} for the enumerable set
$B$ given by a $\Sigma$-formula $B(x)$, to be defined below,
and $n$ given by the natural number encoding an empty sequence. 
We define 
\[
B(x) := \exists k^0\forall q^{\boole^k}\exists l^0\exists m^0 (\overline{x*q}\, l = \pi(m)),
\]
where $\forall q^{\boole^k}$ denotes a \emph{bounded} universal quantification over bit-strings of length $k$. 
Boun\-ded quantification can be encoded away using primitive recursive symbols, 
hence $B(x)$ is still a $\Sigma$-formula. 
We define $p \in B$ by $B(p)$. We have that, for any $\alpha$, $\exists n(\overline\alpha n \in B)$ iff $\alpha\in A$. We instantiate the parameter $S$ of $\haomegaplusS{S}$ by $\seq{} \in B$.

Next, we give an interpretation of the instance
of Axiom~\ref{axiom:endecmin} in $\haomegaplusS{\seq{} \in B}$. 
We cannot literally formalize Axiom~\ref{axiom:endecmin} in 
$\haomegaplusS{S}$,
since $\haomegaplusS{S}$ does not have higher-order quantification (but only 
quantification over higher types), hence we cannot quantify over
subsets. We therefore ``interpret'' (the instance of)
Axiom~\ref{axiom:endecmin}:
 \begin{multline*}
 \forall s^{\boole^*}\left(\forall \chi_C^{\boole^*\to\boole}\left(\forall x^{\boole^*}(\chi_C(x)=1\to B(x)) \to 
 \right.\right.\\
\left.\left.\exists q^{\boole^*}(\chi_C(q)=1 \to B(s)) \to  \right.\right.
\left.\left.\exists r^{\boole^*}\left((\chi_C(r)=1 \to B(s)) \wedge B(r)\right)
  \right)\right)
 \to B(\seq{}).
 \end{multline*}
The enumerable set $B$ is represented by the $\Sigma$-formula $B(x)$, 
the decidable subset $C$ by a characteristic function $\chi_C^{\boole^*\to\boole}$,
replacing the premise
$\forall x \left(x\in C \vee \left(x\in C \to s\in B\right)\right)$. 
The characteristic function should intuitively read as 
$\chi_C(p)=1$ iff ``$p\in C$'', but we take $B(s)$ for $\bot$.

The proof term for \OIC\ is shown in Figure~\ref{fig:OIprf}. 
We obtained it by formalizing the proofs of Theorems~\ref{thm:endec2oi} and~\ref{thm:dns2endec} in $\haomegaplusS{\seq{} \in B}$, and then by normalizing and (hand-)optimizing
the formalized proof term, to obtain a compact and direct program proving
\OIC. 

To ease the presentation, at certain places, we have put after a semicolon the type annotations for individual terms, and the formulas for proof terms. 
Some parts, being too long, have been put below the main proof term.
We suppress the use of equality axioms, to keep the proof term
simple without equality-rewriting terms.
It is known that equality proofs
have no computational content when extracting programs, as they are
realized by singleton data types.

\begin{figure}[t]

\[\begin{array}{ll}
1:~~~& 
\lambda h:\forall\alpha(\forall\beta<\alpha(\beta\in A)\to \alpha\in A).
\tilde\lambda \alpha'.\\
2:~~~&\destas{\\
3:~~~&~~\bigg(\reset{\destas{a_C(\tilde\lambda x. \shift{k}{k(\inr{(\lambda a. k(\inl{a}))})})}{\chi}{b}
{\\4:~~~&~~~~~~\destas{\Big(h \alpha \big(\tilde\lambda\beta. \lambda h':\beta<\alpha.\\ 
5:~~~&~~~~~~~~~~\destas{(h':\beta < \alpha)}{n}{h''}
{\\6:~~~&~~~~~~~~~~\destas{(a_1 (\snd{\snd{h''}}):\overline\beta(n+1)\in B)}{k}{h'''}
{\\7:~~~&~~~~~~~~~~\destas{(h''' (\seq{\beta(n+1)}*\cdots*\seq{\beta(n+k)}):\overline{\beta}(n+k+1) \in A)}}{j}{h^4}
{\\8:~~~&~~~~~~~~~~(\min(n+k+1,j), h^4)}}\big):\alpha\in A\Big)}{l}{c}
{\\9:~~~&~~~~~~\destas{(c:\exists m(\overline{\alpha}l = \pi(m))}{m}{d}
{\\10:~~~&~~~~~~a_I\, (\lambda h. h)\, a_3\, l\, (0, \tilde\lambda q.  (l,(m,d)))}}
}}:\seq{}\in B\bigg)}{k'}{h^5}{\\
11:~~~&\destas{(h^5\, (\overline{\alpha'}k'): \overline{\alpha'}k' \in A)}{j'}{h^6}{
\\12:~~~&(\min(k',j'), h^6)}}
\end{array}\]

$\begin{array}{l}
\alpha := \dot\lambda n. \\
~~\reczero(n+1,\seq{}, (\dot\lambda z. \dot\lambda n'. z*\seq{\ifelse{\chi(z*\seq{0})}{(\ifelse{\chi(z*\seq{1})}{0}{1})}{0}}))(n)
\\[1ex]
a_1: \alpha(n)=1 \to \overline\beta(n+1) \in B:= \lambda h. 
\mathsf{case}~a_B(\chi(\overline\beta(n+1)))~\mathsf{of}\\
~~\left(h_1.(\fst{(b(\overline\beta(n+1)))})\, h_1
\| h_2.(\fst{(b(\overline\beta(n+1)))})\, h_2\right)
\\[1ex]
a_3 := \tilde\lambda n. \lambda h_I:\overline{\alpha}n\in B \to \seq{}\in B.
\lambda h:\overline{\alpha}(n+1)\in B. \\ 
~~\mathsf{case}~a_B (\chi(\overline\alpha n * \seq{0}))~\mathsf{of}\, 
(h_1.
 (\snd{(b(\overline\alpha(n+1)))})\, h_1\, h\\
~~~~\|\, h_2.\caseof{(a_B (\chi(\overline\alpha n * \seq{1})))}
{h_{21}}{(\snd{(b(\overline\alpha(n+1)))})\, h_{21}\, h}
{h_{22}}{h_I\, a_4})
\\[1ex]
a_4: \overline{\alpha}n \in B := \\
~~\destasNewLine{((\fst{(b(\overline{\alpha}n*\seq{0}))})\, h_2:\overline{\alpha}n*\seq{0}\in B)}
{k_0}{f_0:\forall q:\boole^{k_0}.\exists l,m(\overline{\overline{\alpha}n*\seq{0}*q}\ l = \pi(m))}
{\\ ~~\destasNewLine{((\fst{(b(\overline{\alpha}n*\seq{1}))})\, h_{22}:\overline{\alpha}n*\seq{1}\in B)}
{k_1}{f_1:\forall q:\boole^{k_1}.\exists l,m(\overline{\overline{\alpha}n*\seq{1}*q}\ l = \pi(m)}
{\\ ~~(\min(k_0,k_1)+1, \lambda q:\boole^{\min(k_0,k_1)+1}.
  \ifelse{\hd(q)}{f_1 (\overline{\tail(q)}{k_1})}
  {f_0 (\overline{\tail(q)}{k_0})})}}
\end{array}$

\caption{Proof term for OI-$\bool$ of type $((\forall\alpha(\forall\beta<\alpha(\beta\in A)\to \alpha\in A))\to \forall\alpha'(\alpha' \in A))$ in 
$\haomegaplusS{\seq{}\in B}$.}\label{fig:OIprf}
\end{figure}

\smallskip

We now explain the behavior of the proof term. Given a proof $h$ that $A$ is progressive, it has to show that $\alpha' \in A$ for any $\alpha'$. As in the proof of 
Theorem~\ref{thm:endec2oi}, it proves $\seq{} \in B$ (lines 3-10), 
from which we obtain $k'$ such that 
$h^5:\forall q^{\boole^{k'}}\exists l^0\exists m^0 (\overline{q}\, l = \pi(m))$
(line 10).
Then $h^5 (\overline{\alpha'}k')$ gives us $j'$ such that $h^6:\exists m^0
(\overline{\overline{\alpha'}k'}j' = \pi(m))$ (line 11),
so that $(\min(k', j'), h^6)$ proves $\exists l^0 \exists m^0
(\overline{\alpha'}l = \pi(m))$ (line 12). 
(An explicit proof of the equality $\overline{\overline{\alpha'}k'}j'
= \overline{\alpha'}(\min(k', j'))$ 
would need an explicit definition of the $\min$ function and induction). 

To show $\seq{}\in B$, which is the parameter of the system, it applies a reset $\reset{}$ (line 3), and now it has to show the same formula, but classical logic in the form of the shift rule can be used. Indeed, the proof term $\tilde\lambda x. \shift{k}{k(\inr{(\lambda a. k(\inl{a}))})}$ proves the ``decidability'' of $B$: $\forall x^{\boole^*} (x\in B \vee (x\in B \to \seq{}\in B))$. Using the proof term $a_C$ for the formula
\begin{multline*}
\forall x^{\boole^*}(x\in B \vee (x\in B \to \seq{}\in B)) \to\\
\exists\chi^{\boole^* \to \boole}\forall x^{\boole^*}((\chi(x)=1 \rightarrow x\in B) \wedge 
(\chi(x)=0 \rightarrow (x\in B \to \seq{}\in B))),
\end{multline*}
we obtain from the decidability, a characteristic function $\chi^{\boole^*\to\boole}$ for $B$. The proof term $a_C$ is constructed by combining \ACnatbool\ together with a proof term that eliminates disjunction in presence of arithmetic\footnote{For the proof of this statement, $(A\vee B) \leftrightarrow \exists x ((x=1 \rightarrow A) \wedge (x=0 \rightarrow B))$, see for example \S\S 1.3.7 of \cite{TroelstraBook}.}. The proof term $b$ proves the characteristic property of $\chi$, namely,
$\forall x((\chi(x)=1 \rightarrow x\in B) \wedge 
(\chi(x)=0 \rightarrow (x\in B \to \seq{}\in B)))$.

Now, using this $\chi$, the bit-stream $\alpha$ that we saw in the proof of Theorem~\ref{thm:endec2oi} can be constructed using $\reczero$ and $\ifelse{\cdots}{\cdots}{\cdots}$
by (encoded) course-of-values recursion. 

Next one needs to show that $\alpha\in A$ (lines 4-8). 
One uses progressiveness $h$: from $\beta$ and a proof $h'$ of $\beta<\alpha$, one extracts $n$ and a proof $h''$ of
\[
\overline\beta n =\overline\alpha n \wedge (\beta(n) = 0 \wedge \alpha(n) = 1).
\]
Then, $\snd{\snd{h''}}$ shows $\alpha(n) = 1$, and it is for $a_1$ to show that $\overline\alpha n *\seq{0} = \overline\beta(n+1)$ is in $B$, 
which in turn shows, with the help of $h'''$, that 
$\overline{\beta}(n+k+1) \in A$, i.e.,
$\exists j\exists i (\overline{\overline{\beta}(n+k+1)}j = \pi(i))$\footnote{The proof term $a_1 (\snd{\snd{h''}})$ proves 
$\overline{\alpha}n*\seq{0}\in B$, from which 
$\overline\beta(n+1) \in B$ follows using equality axioms.
As remarked earlier, equality-rewriting is implicit in the proof term.}.
Now, one concludes $\beta\in A$ with $(\min(n+k+1,j), h^4)$
by appropriately choosing the witness $\min(n+k+1,j)$
so that $\overline{\overline{\beta}(n+k+1)}j = 
\overline{\beta}(\min(n+k+1,j))$ holds. 
(Again, we suppress the proof term for this equality.)

The proof term $a_1$ derives $\overline\beta(n+1) \in B$
from $\alpha(n)=1$ by making a case distinction. 
To generate the disjunction needed for the case analysis, 
one uses a proof term $a_B$ for $\forall x^{\boole} (x=0 \vee x=1)$.
For the first case
in which $\chi(\overline\beta(n+1)) = 0$, we have an absurdity
$1 = 0$, by definition of $\alpha$, since $\alpha(n)=1$. 
Hence, by equality-rewriting we may use 
the proof term $h_1$ at type $\chi(\overline\beta(n+1)) = 1$.
Now, both the two cases are closed by applying 
$\fst{(b(\overline\beta(n+1)))}$, which proves
$\chi(\overline\beta(n+1)) = 1 \to 
\overline\beta(n+1) \in B$, to $h_1$ and $h_2$, respectively. 

From $\alpha \in A$, one obtains the length $l$ and the index $m$ such that $\overline \alpha l$ is covered by the basic open $\pi(m)$ (the proof term $d$ in line 9), and then one can show that $\overline\alpha 0 = \seq{}$ is in $B$. This last fact is derived by the proof term
\[
a_I\, (\lambda h. h)\, a_3\, l\, (0, \tilde\lambda q.  (l,(m,d))),
\]
where $a_I$ is a proof term behind an instance of the induction axiom showing $\forall l^0 (\overline\alpha l\in B \to \seq{}\in B)$. The proof term $a_I$ uses the proof term $a_3$ which derives
\[
\forall n ((\overline\alpha\, n \in B \to \seq{} \in B)
\to \overline\alpha\, (n+1) \in B \to \seq{} \in B).
\]
It is proved by case analysis, considering the possibilities for the pair $(\chi(\overline\alpha n*\seq{0}), \chi(\overline\alpha n*\seq{1}))$.
If either $\chi(\overline\alpha n *\seq{0}) = 0$ or
$\chi(\overline\alpha n *\seq{1}) = 0$ holds, we close the case
by the characteristic property of $\chi$ 
together with the hypothesis $h$. Otherwise, i.e.
both $\chi(\overline\alpha n *\seq{0}) = 1$ and 
$\chi(\overline\alpha n *\seq{1}) = 1$ holds, we can deduce
$\overline{\alpha}n \in B$ (the proof term $a_4$), 
from which the case follows by
the induction hypothesis. 

\section{Conclusion}\label{sec:conclusion}

We gave a direct proof for \OIC\ in a constructive predicate logic incorporating delimited control operators. 
While computational interpretation of $\mqcplusS{S}$ is available, namely the standard call-by-value weak-head reduction semantics for lambda calculus with shift and reset, we cannot directly analyze the computational behavior of the proof term for \OIC\ because, at the moment,  we do not have a proof term for \ACnatbool\ 
used in the proof term for \OIC.  The best way to overcome this limitation would be to extend $\mqcplusS{S}$ 
so that it can derive \ACnatbool\ as it is done in Martin-Löf Type Theory or constructive versions of Hilbert's epsilon calculus. 

Another way to overcome the limitation would be to use a realizability or functional interpretation that extracts programs from constructive proofs even in presence of choice axioms. For example, by using Spector's extension of Gödel's functional interpretation with bar recursion, we could extract a program from our proof.  
However,  to replace bar recursion is the point of using delimited control operators in the first place. 

If and when our future work is successful, it would allow, at least for the case of the compact Cantor space, to replace Berger's general-recursive computation schema of \emph{open recursion} by a terminating computation schema based on control operators.

The work of Krivine on Classical Realizability gives an interpretation of the Axiom of Dependent Choice \cite{Krivine2003} using control operators for classical logic. Herbelin recently gave a more direct version of that work \cite{Herbelin2012}, using classical control operators and coinduction.

Finally, we would like to mention Veldman's recent work in Constructive Reverse Mathematics \cite{Veldman2011,Veldman2013} that has served as inspiration for our work. An article of Veldman on the equivalence of Open Induction with a number of other axioms is in preparation. In our paper, we showed one direction of this equivalence for the topology of Cantor space seen as the infinite binary tree rather than as the subset of the real line.

\subsection*{Acknowledgments} We would like to thank Wim Veldman for explaining us some of his results, and Ralph Matthes and Hugo Herbelin for valuable comments on the draft. 

\bibliographystyle{plain}
\bibliography{oi-delcont}

\end{document}
